\date{}
\newtheorem{theorem}{Theorem}
\newtheorem{lem}{Lemma}
\newtheorem{cor}{Corollary}
\theoremstyle{definition}
\newtheorem{ex}{Example}
\newtheorem{rem}{Remark}
\begin{document}
\title{\bf Orbits of Free Cyclic Submodules over
Rings of Lower Triangular Matrices
}
\author{\bf Edyta Bartnicka}
\affil{\footnotesize Warsaw University of Life Sciences, Faculty of Applied Informatics and Mathematics\\ Warsaw, Poland\\
{\tt edytabartnicka@wp.pl}}
\maketitle
\begin{abstract}
\noindent
Given a ring $T_n\ (n\geqslant 2)$ of lower triangular $n\times n$ matrices with entries from an arbitrary field $F$, we completely classify the orbits of free cyclic submodules of $^2T_n$ under the action of the general linear group $GL_2(T_n)$. Interestingly, the total number of such orbits is found to be equal to the Bell number $B_n$. A representative of each orbit is also given.
\end{abstract}

{\bf Keywords:} orbits of submodules --- free cyclic submodules --- associative rings with unity --- lower triangular matrices

\section{Introduction}
The general linear group $GL_2(R)$ acts in natural way on the set of all free cyclic submodules of  the two-dimensional left free module ${^2R}$. In the case of an arbitrary associative ring $R$ with unity,  $GL_2(R)$-orbit of free cyclic submodules generated by unimodular pairs is the projective line $\mathbb{P}(R)$, which has already been discussed in a number of papers (see, e.g., \cite{hav1, graph, planes}). We also refer to \cite{her, design} for the definition of the unimodularity. There also exist  rings features non-unimodular free cyclic submodules.
In this note we investigate the case of a ring ${T_n}$ of lower triangular $n\times n$ matrices $A=\left[\begin{array}{ccclr}
a_{11}&0&\ldots&0\\a_{21}&a_{22}&\ldots&0\\ \vdots& \vdots&\ddots&\vdots\\ a_{n1}&a_{n2}&\dots&a_{nn}\end{array}\right]$ over a fixed field $F$, where $n\geqslant 2$.

The main motivation for this paper is the work of Havlicek and Saniga \cite{hs}, where a thorough classification of the vectors of the free left module $^2T_2$ over a ring of ternions $T_2$ up to the natural action of $GL_2(T_2)$ was given; it was found that there exactly two (one unimodular and one non-unimodular) orbits of free cyclic submodules of ${^2T_2}$ under the action of $GL_2(T_2)$.

In \cite{fcs} there are described all orbits of free cyclic submodules in the ring ${T_3}$ under the action of $GL_2(T_3)$. Here we extend and generalize the findings of \cite{fcs} to an arbitrary ring ${T_n}$ and provide the full classification of orbits of free cyclic submodules of ${^2T_n}$.
The starting point of our analysis is a complete characterization of free cyclic submodules, unimodular pairs and outliers generating free cyclic submodules in the case of $T_n$.
Next, we will introduce representatives of all $GL_2(T_n)$-orbits (Theorems \ref{dist.representatives} and \ref{representatives}).
Finally, we will prove that the total number of $GL_2(T_n)$-orbits  is equal to the Bell number $B_n$ (Theorem \ref{number.bell}).


\section{Free cyclic submodules $T_n\left(A, B\right)$}
Consider the free left module ${^2R}$ over a ring $R$ with unity. Let $\left(a, b\right)\subset{^2R}$, then the set $R\left(a, b\right)=\{r\left(a, b\right);r\in R\}$ is a left cyclic submodule of ${^2R}$. If the equation $(ra, rb)=(0, 0)$ implies that $r=0$, then $R\left(a, b\right)$ is called {\it free}.
We  give now the condition for a cyclic submodule $M_n(A, B)\subset {^2M_n}$, where $M_n$ is the ring of $n\times n$ matrices over a fixed field $F$, to be free.

\begin{lem}
A cyclic submodule $M_n\left(A, B\right)$ is free if, and only if, the rank of $[A|B]$ is $n$.\label{free.matrix}
\end{lem}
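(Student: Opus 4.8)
The plan is to translate the defining condition of freeness into a single matrix equation and then read off the rank condition from elementary linear algebra over $F$. The key observation is that for $R\in M_n$ we have $R(A,B)=(RA,RB)$, and since $[A|B]$ denotes the $n\times 2n$ matrix obtained by placing $A$ and $B$ side by side, the product $R[A|B]$ equals $[RA|RB]$. Hence the equation $(RA,RB)=(0,0)$ is equivalent to the single equation $R[A|B]=0$. Consequently, $M_n(A,B)$ is free precisely when the left-multiplication map $R\mapsto R[A|B]$ on $M_n$ has trivial kernel, i.e.\ when $R[A|B]=0$ forces $R=0$.

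Next I would decompose this equation row by row. The matrix equation $R[A|B]=0$ holds if, and only if, every row $r$ of $R$ satisfies $r[A|B]=0$; that is, each row of $R$ lies in the left null space of $[A|B]$. Since $[A|B]$ has $n$ rows, this left null space has dimension $n-\operatorname{rank}([A|B])$, and it is trivial exactly when $\operatorname{rank}([A|B])=n$ (the maximal possible row rank, as $[A|B]$ has $n$ rows).

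For the forward implication, I would argue the contrapositive on the rank side directly: if $\operatorname{rank}([A|B])=n$, then the left null space of $[A|B]$ is $\{0\}$, so $R[A|B]=0$ forces every row of $R$ to vanish, whence $R=0$ and the submodule is free. For the converse I would exhibit a witness: if $\operatorname{rank}([A|B])<n$, choose a nonzero row vector $v\in F^{n}$ with $v[A|B]=0$, and let $R$ be the matrix whose first row is $v$ and whose remaining rows are zero. Then $R\neq 0$ yet $R[A|B]=0$, so $(RA,RB)=(0,0)$ with $R\neq 0$, and the submodule is not free.

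The step needing the most care is the reduction of the quantified freeness condition to the rank statement, since freeness ranges over all $R\in M_n$ while rank is a single invariant of $[A|B]$; once the equivalence $(RA,RB)=(0,0)\iff R[A|B]=0$ and the row-by-row decomposition are in place, both directions follow from the standard dimension formula for the left null space, so no genuine obstacle remains.
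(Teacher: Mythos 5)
Your proposal is correct and follows essentially the same route as the paper: both reduce freeness to the single equation $R[A|B]=0$ and analyze it row by row, your ``left null space has dimension $n-\operatorname{rank}[A|B]$'' being exactly the paper's observation that the rows of $R$, viewed as linear functionals on $F^n$, must vanish on the column space of $[A|B]$. Your explicit witness $R$ (one nonzero row) in the non-free direction is a slightly more concrete rendering of the paper's terser argument, but it is not a different method.
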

\begin{proof}
Let $k_j$ be the $j$-th column of $[A|B]$ and let $w_i=\left[\begin{array}{cclr}x_{i1} x_{i2} \ldots x_{in}\end{array}\right], i=1, 2, \ldots, n$.\\
 Then
$[A|B]=[k_1 k_2\ldots k_{2n}]$ and $X=\left[\begin{array}{cclr}
x_{11}&x_{12}&\ldots&x_{1n}\\x_{21}&x_{22}&\ldots&x_{2n}\\ \vdots &\vdots&\vdots&\vdots\\ x_{n1}&x_{n2}&\ldots&x_{nn}\end{array}\right]=\left[\begin{array}{cclr}w_{1}\\w_{2}\\ \vdots\\ w_{n}\end{array}\right]$. \\
$"\Leftarrow"$ Suppose that the rank of $[A|B]$ is $n$. Equivalently, exactly $n$ columns $k_j$ of $[A|B]$ represent a set of  linearly independent vectors. Then the equation $$\left[\begin{array}{cclr}w_{1}\\w_{2}\\ \vdots\\ w_{n}\end{array}\right][k_1 k_2 \ldots k_{2n}]=0,$$ (where $0$ denotes the zero matrix of size $n \times 2n$) implies that linear functionals $w_1, w_2, \ldots, w_n:F^n\rightarrow F$  are zeros on $n$  linearly independent vectors of the space $F^n$. So they are zero and hence $M_n\left(A, B\right)$ is free. \\
 $"\Rightarrow"$ Assume that a submodule $M_n\left(A, B\right)$ is free. Then the fact that any set of linear functionals $w_1, w_2, \ldots, w_n:F^n\rightarrow F$  are zeros on vectors $k_1, k_2, \dots, k_{2n}$ implies $w_1= w_2= \ldots= w_n=0$. Consequently, there exists a set of $n$ linearly independent vectors $k_j$ and so the rank of $[A|B]$ is $n$.
\end{proof}
\begin{rem}
 The statement of Lemma \ref{free.matrix} remains true if one replaces the ring $M_n$ by any of its subrings, in particular by the ring $T_n$.\label{free.triangular}
\end{rem}

It is well known that any unimodular pair  generates a free cyclic submodule; we shall call such free cyclic submodules unimodular. Now, we will introduce the condition of unimodularity in the case of $T_n$.
\begin{rem} A pair $(A, B)\subset{^2T_n}$ is {\sl unimodular} if, and only if, $a_{ii}\neq 0 \vee b_{ii}\neq 0$ for any $i=1,2, \dots, n$.\label{unim.tri}
\end{rem}
\begin{proof}
According to the general definition of unimodularity,  a pair $(A,B)\subset{^2T_n}$ is unimodular if there exist matrices $X, Y\in T_n$ such that $AX+BY=U\in  T_n^*$, where $T_n^*$ denotes the group of invertible elements of the ring $T_n$. As
$$AX+BY=\left[\begin{array}{ccclr}
a_{11}x_{11}+b_{11}y_{11}&&0\\&\ddots &\\ *&&a_{nn}x_{22}+b_{nn}y_{nn}\end{array}\right]$$ for any $X, Y\in T_n(q)$, then
$AX+BY=U\Leftrightarrow a_{ii}x_{ii}+b_{ii}y_{ii}\neq 0$ for any $i=1,2,\ldots, n$. This is equivalent to  $a_{ii}x_{ii}\neq -b_{ii}y_{ii}$ for any $i=1,2,\ldots, n$. Hence $(A, B)\subset{^2T_n}$ is {\sl unimodular} if, and only if, $a_{ii}\neq 0 \vee b_{ii}\neq 0$ for any $i=1,2, \dots, n$.
\end{proof}

Another type of free cyclic  submodules is the one  represented by pairs that are not contained in any cyclic submodule generated by a unimodular pair, so-called {\sl outliers} (first introduced in \cite{shpp}). As it was pointed out in \cite{hs, fcs}, the number of such non-unimodular free cyclic submodules can be quite large although, of course, not all outliers generate free cyclic submodules.

\begin{lem}
A pair $\left(A, B\right)\subset{^2T_n}$ is
an outlier generating a free cyclic submodule if, and only if, $\left(A, B\right)$ is non-unimodular and $rank[A|B]=n$.\label{out.gen}
\end{lem}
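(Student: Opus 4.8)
The plan is to prove Lemma \ref{out.gen} by unwinding the definition of ``outlier generating a free cyclic submodule'' into its two constituent conditions and matching them against the criteria already established. By definition, an outlier is a pair not contained in any cyclic submodule generated by a unimodular pair, so the phrase ``outlier generating a free cyclic submodule'' carries two requirements: first, that $(A,B)$ genuinely generates a \emph{free} cyclic submodule, and second, that $(A,B)$ is an outlier (hence in particular non-unimodular, since a unimodular pair lies trivially in the unimodular submodule it generates). I would show these two requirements are jointly equivalent to the stated pair of conditions: non-unimodularity together with $\mathrm{rank}[A|B]=n$.

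First I would handle freeness. By Lemma \ref{free.matrix} together with Remark \ref{free.triangular}, the cyclic submodule $T_n(A,B)$ is free if and only if $\mathrm{rank}[A|B]=n$; this immediately supplies the rank condition and accounts for the ``free cyclic submodule'' half of the statement. Next I would treat the outlier condition. Since any unimodular pair generates a free cyclic submodule (as recalled just before Remark \ref{unim.tri}), a pair that is itself unimodular cannot be an outlier. Thus the outlier requirement forces non-unimodularity, and by Remark \ref{unim.tri} this is precisely the failure of the condition $a_{ii}\neq 0 \vee b_{ii}\neq 0$ for some index $i$.

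For the converse direction I would argue that non-unimodularity of $(A,B)$, combined with $\mathrm{rank}[A|B]=n$, already guarantees that $(A,B)$ is an outlier: a non-unimodular pair cannot sit inside any unimodular cyclic submodule, because membership in such a submodule would express $(A,B)$ as a left multiple of a unimodular generator, and one checks via the diagonal description in the proof of Remark \ref{unim.tri} that this propagates unimodularity (or at least the requisite diagonal nonvanishing) back to $(A,B)$, contradicting non-unimodularity. The rank condition then ensures the submodule is free, so $(A,B)$ is an outlier generating a free cyclic submodule.

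The main obstacle I anticipate is the converse's claim that non-unimodularity alone rules out containment in \emph{any} unimodular cyclic submodule; the freeness and unimodularity characterizations are essentially immediate from the cited results, but the containment argument requires a careful look at how left multiplication by an element of $T_n$ acts on the diagonal entries. Concretely, if $(A,B)=C(A',B')$ with $(A',B')$ unimodular and $C\in T_n$, I would track the diagonal entries $a_{ii}=c_{ii}a'_{ii}$ and $b_{ii}=c_{ii}b'_{ii}$ and show that non-unimodularity at index $i$ (i.e. $a_{ii}=b_{ii}=0$) is compatible with unimodularity of $(A',B')$ only when $c_{ii}=0$, and then argue that this still cannot produce the required containment while preserving $\mathrm{rank}[A|B]=n$. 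Pinning down this step rigorously is where the real work lies.
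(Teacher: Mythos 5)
Your decomposition is correct, and it is actually more self-contained than the paper's own proof: the paper notes that freeness of $T_n(A,B)$ is equivalent to $\operatorname{rank}[A|B]=n$ via Lemma \ref{free.matrix} and Remark \ref{free.triangular}, and then simply cites \cite[Theorem 1(2)]{fcs} for the key implication that a non-unimodular pair generating a free cyclic submodule must be an outlier. You attempt to prove that implication directly, and your setup is the right one; moreover, the step you flag as ``where the real work lies'' closes in one line. If $(A,B)=C(A',B')$ with $(A',B')$ unimodular and $C\in T_n$, then $[A|B]=C\,[A'|B']$, so $\operatorname{rank}[A|B]\leqslant\operatorname{rank}C$. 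As you observed, non-unimodularity of $(A,B)$ at some index $i$ together with Remark \ref{unim.tri} applied to $(A',B')$ forces $c_{ii}=0$; since $C$ is lower triangular, $\det C=\prod_{k}c_{kk}=0$, hence $\operatorname{rank}C<n$ and $\operatorname{rank}[A|B]<n$, contradicting the hypothesis. Thus a non-unimodular pair with $\operatorname{rank}[A|B]=n$ lies in no cyclic submodule generated by a unimodular pair, i.e.\ it is an outlier, and together with the freeness criterion this yields both directions of the lemma. One small caution: your sentence asserting that ``a non-unimodular pair cannot sit inside any unimodular cyclic submodule'' is false without the rank hypothesis (take $C=0$, so $(0,0)$ lies in every such submodule); you implicitly correct this in your final paragraph, but the claim should not be stated unconditionally.
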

\begin{proof}
According to \cite[Theorem 1(2)]{fcs} if  a non-unimodular pair $\left(A, B\right)\subset{^2T_n}$\linebreak  generates a free cyclic submodule, then $\left(A, B\right)$ is an outlier. Lemma \ref{free.matrix} and Remark \ref{free.triangular} then yield the desired claim.
\end{proof}

It also follows that there are no other free cyclic submodules  $T_n\left(A, B\right)$ apart from those generated by unimodular pairs or outliers. Moreover, a cyclic submodule  $T_n\left(A, B\right)$ generated by a unimodular pair cannot have non-unimodular generators  (see \cite[Corollary 1]{fcs}).

\section{Representatives of $GL_2(T_n)$-orbits}
The general linear group $GL_2(T_n)$ acts in a natural way (from the right) on the free left $T_n$-module. Orbits of the set of all free cyclic submodules $T_n(q)\left(A, B\right)\subset {^2T_n(q)}$ under the action of $GL_2(T_n)$ are called $GL_2(T_n)$-orbits. To show the explicit form of representatives of such  orbits, we will use the following remark:

\begin{rem}\label{invertibility}
Let $X, Y, W, Z\in T_n$. The matrix $\left[\begin{array}{cclr}
X&Y\\ W&Z\end{array}\right]$ is invertible, i.e. it is an element of the group $GL_2(T_n)$, if, and only if, $x_{ii}z_{ii}-y_{ii}w_{ii}\neq 0$ for all $i=1, 2, \ldots, n$.
\end{rem}

Let ${\tt(A, B)}$ be a pair of ${^2T_n}$ such that for all $i=1, 2, \dots, n, j=1, 2, \dots, i-1$:
\begin{itemize}
\item $a_{ii}, b_{ij}\in \{0, 1\}$,
\item $a_{ij}=b_{ii}=0$,
\item the number of non-zero entries of ${\tt [A|B]}$ is equal to its rank and it is $n$. \end{itemize}
We will use the grotesk typeface for such pairs. According to Lemma \ref{free.matrix} and Remark \ref{free.triangular} any pair ${\tt(A, B)}\in{^2T_n}$ generates a free cyclic submodule.

\begin{theorem}\label{dist.representatives}
Let ${\tt(A, B)}, {\tt(C, D)}\in{^2T_n}$. Submodules $T_n{\tt(A, B)}, T_n{\tt(C, D)}$  are representatives of distinct $GL_2(T_n)$-orbits if, and only if, ${\tt(A, B)}\neq{\tt(C, D)}$.
\end{theorem}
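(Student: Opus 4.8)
The plan is to prove the contrapositive of the nontrivial implication: if $T_n{\tt(A,B)}$ and $T_n{\tt(C,D)}$ lie in the same $GL_2(T_n)$-orbit, then ${\tt(A,B)}={\tt(C,D)}$. (The reverse direction is immediate, since identical pairs generate the same submodule, hence not distinct orbits.) Let $g\in GL_2(T_n)$, with blocks $X,Y,W,Z\in T_n$, carry the first representative onto the second, so that $T_n\big({\tt(A,B)}\,g\big)=T_n{\tt(C,D)}$. Since this common submodule is free cyclic, its two generators ${\tt(A,B)}\,g$ and ${\tt(C,D)}$ differ by a unit $t\in T_n^{*}$, and comparing coordinates yields the system $AX+BW=tC$ and $AY+BZ=tD$. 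Everything will be read off from this system together with the invertibility criterion of Remark \ref{invertibility} ($x_{ii}z_{ii}-y_{ii}w_{ii}\neq0$ for all $i$) and the rigid shape of the grotesk pairs: by the defining conditions $A$ (resp. $C$) is the diagonal $0/1$ matrix $\mathrm{diag}(a_{11},\dots,a_{nn})$, while $B$ (resp. $D$) is strictly lower triangular with a single entry $1$ in each row $i$ for which $a_{ii}=0$, and these $1$'s occupy pairwise distinct columns. I encode this by the root set $S=\{i:a_{ii}=1\}$ and the injective pointer $p\colon i\mapsto p(i)<i$ (for $i\notin S$) with $b_{i,p(i)}=1$; likewise $S'$ and $q$ for ${\tt(C,D)}$.

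First I would read the system on the diagonal, using that $A\mapsto(a_{11},\dots,a_{nn})$ is a ring homomorphism $T_n\to F^{n}$ and that $b_{ii}=d_{ii}=0$. The diagonal of the first equation gives $a_{ii}x_{ii}=t_{ii}c_{ii}$ and that of the second gives $a_{ii}y_{ii}=0$. If $a_{ii}=1$ then $y_{ii}=0$, so $x_{ii}z_{ii}\neq0$ forces $x_{ii}\neq0$ and hence $c_{ii}=t_{ii}^{-1}x_{ii}\neq0$, i.e. $c_{ii}=1$; if $a_{ii}=0$ then $t_{ii}c_{ii}=0$ forces $c_{ii}=0$. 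Thus $a_{ii}=c_{ii}$ for every $i$, so $A=C$ and $S=S'$. This pins down the root sets and lets me treat $p$ and $q$ as injective maps with the common domain $S^{c}$.

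It then remains to prove $B=D$, i.e. $p=q$, which is the heart of the matter. Here I would discard the first equation and exploit the second, $AY+BZ=tD$, which no longer involves $C$. Fix $i\notin S$. Since row $i$ of $A$ vanishes and row $i$ of $B$ is $e_{p(i)}^{\top}$, the $(i,q(i))$ entry of the left-hand side equals $Z_{p(i),q(i)}$; since each row of $D$ carries its single $1$ in column $q(\cdot)$ and $q$ is injective, the $(i,q(i))$ entry of $tD$ equals $t_{ii}$. Comparing these entries gives
\[
Z_{p(i),q(i)}=t_{ii}\neq0.
\]
As $Z\in T_n$ is lower triangular, a nonzero entry forces row index $\geq$ column index, so $p(i)\geq q(i)$.

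The remaining obstacle is to upgrade this inequality to an equality, and the clean way to do so is by symmetry. Applying $g^{-1}\in GL_2(T_n)$ (whose blocks again lie in $T_n$) to the relation produces the mirror system obtained by interchanging $B,D$ and replacing $t$ by $t^{-1}$; reading its second equation at the entry $(i,p(i))$ yields $Z'_{q(i),p(i)}=(t^{-1})_{ii}\neq0$ for the lower-left block $Z'$ of $g^{-1}$, whence $q(i)\geq p(i)$. Combining the two inequalities gives $p(i)=q(i)$ for every $i\notin S$, that is $B=D$, and together with $A=C$ this forces ${\tt(A,B)}={\tt(C,D)}$. I expect the delicate point to be exactly this last step: one must guarantee that a single entry comparison determines the pointer column, which is precisely where the injectivity of $p$ and $q$ (the ``distinct columns'' hypothesis) and the lower-triangularity of the off-diagonal blocks of $g$ and $g^{-1}$ do the essential work.
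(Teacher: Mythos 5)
Your proof is correct, and its first half (forcing $A=C$ from the diagonal relations $a_{ii}x_{ii}=t_{ii}c_{ii}$, $a_{ii}y_{ii}=0$ together with Remark \ref{invertibility}) is essentially the paper's argument. The second half, where $B=D$ is established, takes a genuinely different route. For a row $i$ with $a_{ii}=0$ and $b_{ij}=1$, the paper reads the \emph{first} block equation at the entry $(i,j)$: since $c_{ij}=0$ this gives $b_{ij}w_{jj}=0$, hence $w_{jj}=0$, whence Remark \ref{invertibility} forces $z_{jj}\neq 0$ and the second block equation yields $d_{ij}=b_{ij}z_{jj}\neq 0$, i.e.\ $d_{ij}=1$ --- the column of the $1$ in row $i$ of $D$ is pinned down in one stroke, with no appeal to $g^{-1}$. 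You instead discard the first block equation entirely and compare single entries of the second equation for $g$ and for $g^{-1}$, using only lower-triangularity of the relevant blocks and injectivity of the pointer maps $p,q$ to obtain the two inequalities $p(i)\geqslant q(i)$ and $q(i)\geqslant p(i)$. Both arguments are sound; the paper's is shorter and leans on the full invertibility criterion $x_{jj}z_{jj}-y_{jj}w_{jj}\neq 0$, while yours trades that for the extra step of passing to the inverse matrix. Two small remarks: the block $Z'$ you use is the lower-\emph{right} block of $g^{-1}$ (the $(2,2)$ block), not the lower-left one, though this does not affect the argument since it is still lower triangular; and your explicit insertion of the unit $t$ with $(A,B)g=t(C,D)$ is a point of rigor the paper suppresses by writing $(A,B)g=(C,D)$ outright.
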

\begin{proof}
$"\Rightarrow"$ This is obvious.\newline
$"\Leftarrow"$  We give the proof by contradiction.
Assume that $T_n{\tt(A, B)}, T_n{\tt(C, D)}$  are in the same $GL_2(T_n)$-orbit, i.e. there exists $\left[\begin{array}{cclr}
X&Y\\ W&Z\end{array}\right]\in GL_2(T_n)$ such that ${\tt(A, B)}\left[\begin{array}{cclr}
X&Y\\ W&Z\end{array}\right]={\tt(C, D)}$. Hence,
$$\begin{cases}
a_{ii}x_{ii}=c_{ii},\\a_{ii}y_{ii}=0,\end{cases}$$ for any $i\in \{1, 2, \dots, n\}$.

Submodules $T_n{\tt(A, B)}, T_n{\tt(C, D)}$  are free, thus $a_{11}, c_{11}\in F^*=F\backslash\{0\}$, and so $a_{11}=c_{11}=1$.

Let $i\in \{2, 3, \dots, n\}$. Consider two cases:
\begin{enumerate}
\item $a_{ii}=1$; Then by the above system of equations and according to Remark \ref{invertibility} we get $y_{ii}=0$,  $x_{ii}\neq 0$ and $c_{ii}\neq 0$. Consequently, $c_{ii}=a_{ii}=1$ and so all the remaining  entries in the $i$-th row of matrices {\tt A, B, C, D} are equal to zero.
\item $a_{ii}=0$; Then we get immediately $c_{ii}=a_{ii}=0$. The form of ${\tt(A, B)}\in{^2T_n}$ implies that there exists exactly one non-zero entry $b_{ij}$ in the $i$-th row of the matrix {\tt B}. Suppose then that $b_{ij}=1$ for some $j\in \{1, 2, \dots, i-1\}$. We obtain the following system of equations:
$$\begin{cases}c_{ij}=0=a_{ii}x_{ij}+b_{ij}w_{jj}+b_{i(j+1)}w_{(j+1)j}+ \ldots +b_{i(i-1)}w_{(i-1)j}=b_{ij}w_{jj},\\
d_{ij}=a_{ii}y_{ij}+b_{ij}z_{jj}+b_{i(j+1)}z_{(j+1)j}+ \ldots +b_{i(i-1)}z_{(i-1)j}=b_{ij}z_{jj}.\end{cases}$$
Therefore $w_{jj}=0$. Remark \ref{invertibility}  leads to $z_{jj}\neq 0$, and so $d_{ij}\neq 0$. Thus $d_{ij}=b_{ij}=1$ and  all the remaining  entries in the $i$-th row of matrices ${\tt A, B, C, D}$ are equal to zero.
\end{enumerate}
This means that ${\tt(A, B)}={\tt(C, D)}$, which completes the proof.
\end{proof}
\begin{theorem}\label{representatives}
Any $GL_2(T_n)$-orbit  has a unique representative $T_n{\tt(A, B)}\subset{^2T_n}$.
\end{theorem}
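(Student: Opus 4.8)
The assertion has two parts: existence of at least one canonical representative in every orbit, and uniqueness. Uniqueness is already in hand. By Theorem \ref{dist.representatives} two distinct grotesk pairs generate submodules lying in distinct $GL_2(T_n)$-orbits, so no orbit can contain more than one representative of the form $T_n{\tt(A,B)}$. Thus the entire content of the theorem is existence: I must show that every free cyclic submodule $T_n(A,B)$ — equivalently, every pair with $rank[A|B]=n$ (Lemma \ref{free.matrix}, Remark \ref{free.triangular}) — can be carried by some element of $GL_2(T_n)$ to a grotesk pair.

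The plan is to realise the right action as a constrained column reduction of the $n\times 2n$ matrix $[A|B]$. Writing $(A,B)\left[\begin{smallmatrix}X&Y\\W&Z\end{smallmatrix}\right]=(AX+BW,\,AY+BZ)$ and using that $X,Y,W,Z$ are lower triangular, the new column at level $i$ in either block is a combination of the old columns at levels $\geq i$ from both blocks; moreover, by Remark \ref{invertibility}, the two columns at a fixed level $i$ may be mixed by an arbitrary invertible $2\times2$ matrix over $F$. So the admissible moves are: (i) an invertible mixing, at each level $i$, of the pair consisting of $A$-column $i$ and $B$-column $i$; and (ii) adding to any column a combination of columns of strictly higher level. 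Since a column of level $i$ is supported only in rows $\geq i$, adding it into another column, or mixing the two columns of level $i$, changes only rows of index $\geq i$; this is what lets the reduction clear a row without disturbing the rows above it.

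First I would normalise the diagonal. For each $i$ the pair $(a_{ii},b_{ii})$ is acted on by the level-$i$ mixing; if it is nonzero it can be sent to $(1,0)$, and otherwise it stays $(0,0)$. This splits the levels into ``unimodular'' positions with $a_{ii}=1$ (in the sense of Remark \ref{unim.tri}) and the remaining ``outlier'' positions with $a_{ii}=b_{ii}=0$. I would then sweep the rows $i=1,\dots,n$, maintaining the invariant that rows $1,\dots,i-1$ already have the grotesk single-entry form. For a unimodular position the entry $a_{ii}=1$ is a pivot: its column $A$-column $i$ has level $i$, which dominates the levels of all other entries of row $i$, so by move (ii) I can cancel the rest of row $i$, bringing it to a single $1$ in position $a_{ii}$, and because this column is supported in rows $\geq i$ the rows above are untouched. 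For an outlier position the support of row $i$ lies entirely at levels $<i$, and the row must instead be brought to a single $1$ in a strictly-lower $B$-column, that is, to some $b_{ij}=1$ with $j<i$.

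The hard part will be this second case. The column operations are one-directional: a pivot living at level $j$ can cancel only entries at levels $\leq j$, yet an outlier row may carry nonzero entries at levels above its intended $B$-pivot; and, worse, the pivot column, being supported in all rows from level $j$ downward, will in general disturb the intermediate rows $j,\dots,i-1$ that have already been put in canonical form. Hence a naive top-to-bottom sweep breaks down for outlier rows. The reduction must instead be organised globally, assigning to each outlier row a $B$-column pivot at a suitable lower level so that the assignments are mutually compatible and no column is claimed twice; one then checks that the resulting pair satisfies all three defining conditions of a grotesk pair, in particular that its number of nonzero entries equals $rank[A|B]=n$. Showing that such a compatible assignment always exists is the crux, and it is exactly this assignment that organises the rows into the blocks of a set partition — the combinatorial fact underlying the Bell-number count of Theorem \ref{number.bell}. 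Once existence is secured, uniqueness from Theorem \ref{dist.representatives} completes the proof.
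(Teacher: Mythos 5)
Your proposal correctly reduces the theorem to an existence statement (uniqueness being Theorem \ref{dist.representatives}) and correctly translates the right action into constrained column operations on $[A|B]$, but it stops exactly where the paper's proof begins in earnest. You yourself name the crux --- finding, for each outlier row, a $B$-column pivot at a lower level so that the assignments are mutually compatible, no column is claimed twice, and the already-reduced rows are not disturbed --- and then you do not prove it. The paper's proof is essentially nothing but this step: it selects the pivot columns $j_t$ for the nonzero rows $i_t$ of the intermediate matrix $G$ by an explicit greedy rule subject to conditions guaranteeing a nonvanishing minor (their existence follows from $\mathrm{rank}\,G=|T|$, i.e.\ from Lemma \ref{free.matrix}), and then constructs an explicit invertible $V\in T_n$ realizing the reduction, verifying $v_{j_tj_t}\neq 0$ via that minor. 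Asserting that ``a compatible assignment always exists'' without this construction leaves the theorem unproved.

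There is also a structural omission that makes your difficulty look harder than it is. The theorem is about submodules, so besides the right action of $GL_2(T_n)$ on generators one may replace the generator $(A,B)$ by $U(A,B)$ for any unit $U\in T_n^*$; the paper's existence statement is exactly $U(A,B)Q={\tt(A,B)}$. This left multiplication supplies \emph{row} operations, and it is precisely what resolves the obstruction you point out (a pivot column of level $j$ polluting the intermediate rows $j,\dots,i-1$): in the paper the residual entries left after the column reduction by $V$ are cleared by a final left multiplication by $K\in T_n^*$. Working only with column operations, as your plan does, the reduction cannot in general be completed. So the proposal is a correct road map with the destination marked, but the road itself --- the pivot-selection argument, the invertibility of $V$, and the use of the left $T_n^*$-action --- is missing.
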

\begin{proof}
Taking into account Theorem \ref{dist.representatives} it suffices to show that any $GL_2(T_n)$-orbit has a representative of the form $T_n{\tt(A, B)}\subset{^2T_n}$. This is equivalent to saying that for any free cyclic submodule $T_n\left(A, B\right)\subset{^2T_n}$ there exist matrices $Q\in GL_2(T_n)$ and $U\in T_n^*$ such that
$U(A, B)Q={\tt(A, B)}.$
We will now show that this indeed holds.

\noindent
\item Let $T_n\left(A, B\right)$ be any free cyclic submodule of ${^2T_n}$.
We multiply $\left(A, B\right)\in {^2T_n}$ by  $\left[\begin{array}{cclr}
X&Y\\ W&Z\end{array}\right]\in GL_2(T_n)$, with entries $y_{ii}=-a_{ii}^{-1}b_{ii}z_{ii}$ if $a_{ii}\neq 0$, and
$z_{ii}=0$ if $a_{ii}= 0$, $i=1, 2, \dots, n$.  This yields
$$(A, B)\left[\begin{array}{cclr}
X&Y\\ W&Z\end{array}\right]=(C, D),$$
where $d_{ii}=0$ for any $i=1, 2, \dots, n$.

\noindent
The next step is multiplying
$$P^{-1}\left(C, D\right)\left[\begin{array}{cclr}
P&0\\ 0&I\end{array}\right]=\left(P^{-1}CP, P^{-1}D\right)=(J, E),$$
where $P\in T_n$ and $J$ is a Jordan normal form of $C$ with block matrices $\lambda_i$, which are just elements of $F$. Of course, $e_{ii}=0$ for any $i=1, 2, \dots, n$.

\noindent
Let $J'$ be the matrix obtained from $J$ by replacing any $\lambda_i\neq 0$ by $\lambda_i^{-1}$, and $\lambda_i=0$ by $1$. The result of multiplication $J'(J, E)=(J'J, J'E)$ is a pair $({\tt A}, F)$, where, for any $i=1, 2, \dots, n$, $a_{ii}=\begin{cases}1, \lambda_i\neq 0\\
0, \lambda_i=0\end{cases}$ and $f_{ii}=0$.

\noindent
We multiply again the last pair $({\tt A}, F)$ by
$\left[\begin{array}{cclr}
I&-F\\ 0&I\end{array}\right]$ and obtain a pair $({\tt A}, -{\tt A}F+F)=({\tt A}, G)$, where $g_{ij}=0$ for any $i, j=1, 2, \dots, n$ such that $a_{ii}=1$.

\noindent
Suppose that $i_t, t\in T$ are the numbers of non-zero rows of matrix $G$, where $i_t<i_s$ if $t<s$. Let now
\begin{enumerate}
\item $j_t=j_1$ be the number of column of $G$ such that $g_{i_1j_1}\neq 0$ and $g_{i_1k}= 0$ for any $k>j_1$;
\item $j_t, t>1$ be the numbers of columns of $G$ which fulfils all the requirements set:\label{2}
\begin{enumerate}[a.]
\item  $j_t\neq j_r$ for any $r=1, 2, \dots, t-1$,\label{a}
\item $g_{i_tj_t}\neq 0$,
\item $g_{i_tk}= 0$ for any $k$ satisfying the two conditions: $k>j_t$ and $k\neq j_r$ for any $r=1, 2, \dots, t-1$,
    \item a sequence of vectors $\left[\begin{array}{cclr}
g_{i_1j_1}\\g_{i_2j_1}\\\vdots\\g_{i_tj_1}\end{array}\right],
\left[\begin{array}{cclr}
g_{i_1j_2}\\g_{i_2j_2}\\\vdots\\g_{i_tj_2}\end{array}\right],
\dots, \left[\begin{array}{cclr}
g_{i_1j_t}\\g_{i_2j_t}\\\vdots\\g_{i_tj_t}\end{array}\right]$ is linearly independent (the possibility of choice of such $j_t$ is guaranteed because the rank of $G$ is $|T|$ by the Lemma \ref{free.matrix} and Remark \ref{free.triangular}).\label{d}
    \end{enumerate}
\end{enumerate}
Consider the matrix $V$ of $T_n$ such that
$$\sum_{\alpha=j_t}^{i_t-1}g_{i_t\alpha}v_{\alpha j_t}=1$$
and 
$$\sum_{\alpha=l}^{i_t-1}g_{i_t\alpha}v_{\alpha l}=0$$
for any $t\in T,\ l=1, 2, \dots, j_t-1$.

\noindent
We prove now that $V$ is invertible, i.e. we show that $v_{j_tj_t}\neq 0$ for each $j_t$ one by one, from the greatest to the smallest, by using the method described below. Let $j_{t_m}=\max\{j_t, t\in T\}$ 
and let $t$ be a fixed element of $T$. Assume that $j_{t_1}<j_{t_2}<\dots <j_{t_m}$ be all numbers of columns of $G$  greater then $j_t$ and  meet the above requirements \ref{a} - \ref{d}. From the conditions for $V$ we obtain a system of equations:
$$\begin{cases}g_{i_tj_t}v_{j_tj_t}+g_{i_tj_{t_1}}v_{j_{t_1}j_t}+\dots + g_{i_tj_{t_m}}v_{j_{t_m}j_t}=1\\
g_{i_{t_1}j_t}v_{j_tj_t}+g_{i_{t_1}j_{t_1}}v_{j_{t_1}j_t}+\dots + g_{i_{t_1}j_{t_m}}v_{j_{t_m}j_t}=0\\
g_{i_{t_2}j_t}v_{j_tj_t}+g_{i_{t_2}j_{t_1}}v_{j_{t_1}j_t}+\dots + g_{i_{t_2}j_{t_m}}v_{j_{t_m}j_t}=0\\
\vdots\\
g_{i_{t_m}j_t}v_{j_tj_t}+g_{i_{t_m}j_{t_1}}v_{j_{t_1}j_t}+\dots + g_{i_{t_m}j_{t_m}}v_{j_{t_m}j_t}=0\end{cases}$$
Obviously, $v_{j_tj_t}=g_{i_tj_t}
^{-1}$ if $j_t=j_{t_m}$, otherwise
we solve these equations from the last to the first one for $v_{j_{t_m}j_t}, v_{j_{t_{m-1}}j_t},\dots, v_{j_{t_1}j_t}$, respectively. Plugging these solutions to the first equation and making  simple calculations yields
$$v_{j_tj_t}=\left[\begin{array}{cccclr}
\left|\begin{array}{cccclr}
g_{i_tj_t}&g_{i_tj_{t_1}}&\dots&g_{i_tj_{t_m}}\\ g_{i_{t_1}j_t}&g_{i_{t_1}j_{t_1}}&\dots&g_{i_{t_1}j_{t_m}}\\
\vdots&\vdots&\vdots&\vdots\\
g_{i_{t_m}j_t}&g_{i_{t_m}j_{t_1}}&\dots&g_{i_{t_m}j_{t_m}}\\
\end{array}\right|v_{j_{t_1}j_{t_1}}v_{j_{t_2}j_{t_2}}\dots v_{j_{t_m}j_{t_m}} \end{array}\right]^{-1}.$$
We should mention that the conditions \ref{a} - \ref{d} for $j_t$ provide that the above determinant is different from zero.

The multiplication $({\tt A}, G)\left[\begin{array}{cclr}
I&0\\ 0&V\end{array}\right]$ gives a pair $({\tt A}, H)$, where for any pair of indices $i_t, j_t, t\in T$ and for any $l=1, 2, \dots, j_t-1$ we have: $h_{i_tj_t}=g_{i_tj_t}v_{j_tj_t}=g_{i_tj_t}g_{i_tj_t}^{-1}=1$ if $j_t=i_t-1$, otherwise
$$h_{i_tj_t}=g_{i_tj_t}v_{j_tj_t}+g_{i_t(j_t+1)}v_{(j_t+1)j_t}+\dots
+g_{i_t(i_t-1)}v_{(i_t-1)j_t}=$$
$$=g_{i_tj_t}g_{i_tj_t}^{-1}\left(1-g_{i_t(j_t+1)}v_{(j_t+1)j_t}-g_{i_t(j_t+2)}v_{(j_t+2)j_t}-\dots
-g_{i_t(i_t-1)}v_{(i_t-1)j_t}\right)+$$
$$+g_{i_t(j_t+1)}v_{(j_t+1)j_t}+\dots
+g_{i_t(i_t-1)}v_{(i_t-1)j_t}=1,$$ and
$$h_{i_tl}=g_{i_tl}v_{ll}+g_{i_t(l+1)}v_{(l+1)l}+\dots
+g_{i_t(i_t-1)}v_{(i_t-1)l}=$$
$$=g_{i_tl}v_{ll}+g_{i_t(l+1)}v_{(l+1)l}+\dots +g_{i_t(j_t-1)}v_{(j_t-1)l}+g_{i_tj_t}v_{j_tl}=$$
$$=g_{i_tl}v_{ll}+g_{i_t(l+1)}v_{(l+1)l}+\dots +g_{i_t(j_t-1)}v_{(j_t-1)l}+$$
$$+g_{i_tj_t}\left[-g_{i_tj_t}^{-1}\left(g_{i_tl}v_{ll}+g_{i_t(l+1)}v_{(l+1)l}+\dots
+g_{i_t(j_t-1)}v_{(j_t-1)l}\right)\right]=0$$
if $j_t=i_t-1$, otherwise
$$h_{i_tl}=g_{i_tl}v_{ll}+g_{i_t(l+1)}v_{(l+1)l}+\dots
+g_{i_t(i_t-1)}v_{(i_t-1)l}=$$
$$=g_{i_tl}v_{ll}+g_{i_t(l+1)}v_{(l+1)l}+\dots +g_{i_t(j_t-1)}v_{(j_t-1)l}+$$ $$+g_{i_tj_t}v_{j_tl}+g_{i_t(j_t+1)}v_{(j_t+1)l}+\dots
+g_{i_t(i_t-1)}v_{(i_t-1)l}=$$
$$=g_{i_tl}v_{ll}+g_{i_t(l+1)}v_{(l+1)l}+\dots +g_{i_t(j_t-1)}v_{(j_t-1)l}+$$ $$+g_{i_tj_t}
\Big\{-g_{i_tj_t}^{-1}\big[\left(g_{i_tl}v_{ll}+g_{i_t(l+1)}v_{(l+1)l}+\dots
+g_{i_t(j_t-1)}v_{(j_t-1)l}\right)+$$
$$+\left(g_{i_t(j_t+1)}v_{(j_t+1)l}+g_{i_t(j_t+2)}v_{(j_t+2)l}+\dots
+g_{i_t(i_t-1)}v_{(i_t-1)l}\right)\big]\Big\}+$$
$$+g_{i_t(j_t+1)}v_{(j_t+1)l}+g_{i_t(j_t+2)}v_{(j_t+2)l}+\dots
+g_{i_t(i_t-1)}v_{(i_t-1)l}=0.$$
Clearly, for any $i=1, 2, \dots, n$, and $j=1, 2, \dots, i-1$ we have: $h_{ii}=g_{ii}v_{ii}=0$, and  $h_{ij}=g_{ij}v_{jj}+g_{i(j+1)}v_{(j+1)j}+\dots+ g_{i(i-1)}v_{(i-1)j}=0$ if $g_{ij}=0$ for all   $j=1, 2, \dots, i-1$.

\noindent
Now, we will furnish the last part of the proof. Let $K$ be a matrix of $T_n^*$ such that $k_{ii}=1$ for all $i=1, 2, \dots, n$ and $$k_{ii'}=-k_{i(i'+1)}h_{(i'+1)j}-k_{i(i'+2)}h_{(i'+2)j}-\dots-k_{ii}h_{ij}$$ if the $j$-th column of $H$ is of the form: $h_{i'j}=1, h_{mj}=0$ for all $m<i'$ and $h_{ij}\neq 0$ for some $i>i'$. Then $K({\tt A}, H)=({\tt A}, L)$, where $$l_{ij}=k_{i(j+1)}h_{(j+1)j}+k_{i(j+2)}h_{(j+2)j}+\dots+k_{ii}h_{ij}=$$
$$=k_{ii'}+k_{i(i'+1)}h_{(i'+1)j}+k_{i(i'+2)}h_{(i'+2)j}+\dots+k_{ii}h_{ij}=0.$$ Therefore, $({\tt A}, L)=({\tt A}, \tt B)$, which completes the proof.
\end{proof}
\begin{cor}\label{number.pairs}
The total number of $GL_2(T_n)$-orbits is equal to the number of pairs ${\tt(A, B)}\in{^2T_n}$.
\end{cor}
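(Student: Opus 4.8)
The plan is to convert the corollary into a statement about the cardinality of two finite sets by exhibiting an explicit bijection, and then to let the two preceding theorems do all the work. Concretely, I would introduce the map $\Phi$ sending each grotesk pair ${\tt(A,B)}\in{^2T_n}$ to the $GL_2(T_n)$-orbit of the free cyclic submodule $T_n{\tt(A,B)}$. This map is well defined: by Lemma \ref{free.matrix} and Remark \ref{free.triangular} every grotesk pair generates a free cyclic submodule, so $T_n{\tt(A,B)}$ really does lie in a well-defined orbit. Once $\Phi$ is in place, the entire corollary reduces to verifying that $\Phi$ is a bijection, and each of the two required properties is handed to us by one of Theorems \ref{dist.representatives} and \ref{representatives}.

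First I would establish surjectivity of $\Phi$ by appealing to Theorem \ref{representatives}: every $GL_2(T_n)$-orbit contains a representative of the form $T_n{\tt(A,B)}$, so every orbit is in the image of $\Phi$. Next I would establish injectivity of $\Phi$ using Theorem \ref{dist.representatives}: whenever ${\tt(A,B)}\neq{\tt(C,D)}$ the submodules $T_n{\tt(A,B)}$ and $T_n{\tt(C,D)}$ lie in distinct orbits, that is $\Phi({\tt(A,B)})\neq\Phi({\tt(C,D)})$; contrapositively, $\Phi({\tt(A,B)})=\Phi({\tt(C,D)})$ forces ${\tt(A,B)}={\tt(C,D)}$. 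With both properties in hand, $\Phi$ is a bijection from the set of grotesk pairs onto the set of $GL_2(T_n)$-orbits, and comparing cardinalities yields exactly the claimed equality.

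I do not expect any genuine obstacle here: the corollary is a pure bookkeeping consequence of the two theorems, arranged so that the later enumeration via the Bell number (Theorem \ref{number.bell}) may be carried out by counting the comparatively transparent grotesk pairs rather than the orbits directly. The only point I would take care to state cleanly is the division of labour between the two inputs — Theorem \ref{representatives} provides the \emph{existence} of a grotesk representative, hence surjectivity, while Theorem \ref{dist.representatives} provides its \emph{uniqueness}, hence injectivity — so that it is clear neither theorem alone suffices and the corollary genuinely rests on both.
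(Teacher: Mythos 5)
Your proposal is correct and matches the paper's (implicit) reasoning exactly: the corollary is stated without a separate proof precisely because it follows from Theorem \ref{representatives} (existence of a grotesk representative for every orbit, i.e. surjectivity) together with Theorem \ref{dist.representatives} (distinct grotesk pairs give distinct orbits, i.e. injectivity). Your explicit bijection $\Phi$ is just a clean write-up of that same argument.
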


\begin{ex}
Consider the  pair
$$({\tt A}, G)=\left(\left[\begin{array}{ccccclr}
1&0&0&0&0&0&0\\
0&1&0&0&0&0&0\\
0&0&0&0&0&0&0\\
0&0&0&1&0&0&0\\
0&0&0&0&0&0&0\\
0&0&0&0&0&0&0\\
0&0&0&0&0&0&0
\end{array}\right], \left[\begin{array}{ccccclr}
0&0&0&0&0&0&0\\
0&0&0&0&0&0&0\\
g_{31}&g_{32}&0&0&0&0&0\\
0&0&0&0&0&0&0\\
g_{51}&g_{52}&g_{53}&g_{54}&0&0&0\\
g_{61}&g_{62}&g_{63}&g_{64}&0&0&0\\
g_{71}&g_{72}&g_{73}&g_{74}&0&0&0\\
\end{array}\right]\right)\in {^2T_7}$$ where $g_{32}, g_{54}, g_{63}, g_{71}\in F^*$.
Based on the proof of Theorem \ref{representatives} we find the indices $i_t, j_t, t\in T$ and  determine the corresponding entries of $V\in T_7^*$:
\begin{itemize}
\item $i_1=3, j_1=2$,

$v_{22}=g_{32}^{-1}$,

$v_{21}=-g_{32}^{-1}g_{31}v_{11}$;

\item $i_2=5, j_2=4$,

$v_{44}=g_{54}^{-1}$,

$v_{41}=-g_{54}^{-1}\left(g_{51}v_{11}+g_{52}v_{21}+g_{53}v_{31}\right)$,

$v_{42}=-g_{54}^{-1}\left(g_{52}v_{22}+g_{53}v_{32}\right)$,

$v_{43}=-g_{54}^{-1}g_{53}v_{33}$;

\item $i_3=6, j_3=3$,

$v_{33}=g_{63}^{-1}\left(1-g_{64}v_{43}\right)$,

$v_{31}=-g_{63}^{-1}\left(g_{61}v_{11}+g_{62}v_{21}+g_{64}v_{41}\right)$,

$v_{32}=-g_{63}^{-1}\left(g_{62}v_{22}+g_{64}v_{42}\right)$;

\item $i_4=7, j_4=1$,

$v_{11}=g_{71}^{-1}\left(1-g_{72}v_{21}-g_{73}v_{31}-g_{74}v_{41}\right).$\end{itemize}
The result of multiplication $({\tt A}, G)\left[\begin{array}{cclr}
I&0\\ 0&V\end{array}\right]$ is a pair $({\tt A}, H)\in {^2T_7}$, where:

$h_{32}=g_{32}v_{22}=g_{32}g_{32}^{-1}=1,$
$h_{31}=g_{31}v_{11}+g_{32}v_{21}=g_{31}v_{11}+g_{32}\left(-g_{32}^{-1}g_{31}v_{11}\right)=$ $=0,$

$h_{54}=g_{54}v_{44}=g_{54}g_{54}^{-1}=1,$
$h_{51}=g_{51}v_{11}+g_{52}v_{21}+g_{53}v_{31}+g_{54}v_{41}=g_{51}v_{11}+$ $+g_{52}v_{21}+g_{53}v_{31}+g_{54}
\left[-g_{54}^{-1}\left(g_{51}v_{11}+g_{52}v_{21}+g_{53}v_{31}\right)\right]=0,$
$h_{52}=g_{52}v_{22}++g_{53}v_{32}+g_{54}v_{42}=g_{52}v_{22}+g_{53}v_{32}+g_{54}
\left[-g_{54}^{-1}\left(g_{52}v_{22}+g_{53}v_{32}\right)\right]=0,$

$h_{53}=g_{53}v_{33}+g_{54}v_{43}=g_{53}v_{33}+g_{54}
\left(-g_{54}^{-1}g_{53}v_{33}\right)=0,$

$h_{63}=g_{63}v_{33}+g_{64}v_{43}=g_{63}g_{63}^{-1}\left(1-g_{64}v_{43}\right)+g_{64}v_{43}=1,$
$h_{61}=g_{61}v_{11}+g_{62}v_{21}++g_{63}v_{31}+g_{64}v_{41}=g_{61}v_{11}+g_{62}v_{21}+g_{63}\left[-g_{63}^{-1}
\left(g_{61}v_{11}+g_{62}v_{21}+g_{64}v_{41}\right)\right]+$ $+g_{64}v_{41}=0,$
$h_{62}=g_{62}v_{22}+g_{63}v_{32}+g_{64}v_{42}=g_{62}v_{22}+g_{63}
\left[-g_{63}^{-1}\left(g_{62}v_{22}+g_{64}v_{42}\right)\right]+$ $+g_{64}v_{42}=0,$

$h_{71}=g_{71}v_{11}+g_{72}v_{21}+g_{73}v_{31}+g_{74}v_{41}=g_{71}g_{71}^{-1}
\left(1-g_{72}v_{21}-g_{73}v_{31}-g_{74}v_{41}\right)+$ $+g_{72}v_{21}+g_{73}v_{31}+g_{74}v_{41}=1.$

\noindent
We have  $h_{ii}=g_{ii}v_{ii}=0$ for any $i=1, 2, \dots, 7$ and  $h_{21}=g_{21}v_{11}=0$, $h_{4j}=g_{4j}v_{jj}+g_{4(j+1)}v_{(j+1)j}+\dots+g_{43}v_{3j}=0 $  for $j=1, 2, 3$. Moreover, $h_{65}=g_{65}v_{55}=0$, $h_{75}=g_{75}v_{55}+g_{76}v_{65}=0$, $h_{76}=g_{76}v_{66}=0.$ Thus
$$H=\left[\begin{array}{ccccclr}
0&0&0&0&0&0&0\\
0&0&0&0&0&0&0\\
0&1&0&0&0&0&0\\
0&0&0&0&0&0&0\\
0&0&0&1&0&0&0\\
0&0&1&h_{64}&0&0&0\\
1&h_{72}&h_{73}&h_{74}&0&0&0\\
\end{array}\right].$$

Then $K({\tt A}, H)={\tt(A, B)}
$, where
$$K=\left[\begin{array}{ccccclr}
1&0&0&0&0&0&0\\
0&1&0&0&0&0&0\\
0&0&1&0&0&0&0\\
0&0&0&1&0&0&0\\
0&0&0&0&1&0&0\\
0&0&0&0&-h_{64}&0&0\\
0&0&-h_{72}&0&h_{73}h_{64}-h_{74}&-h_{73}&1\\
\end{array}\right]$$
and $({\tt A}, \tt B)=\left(\left[\begin{array}{ccccclr}
1&0&0&0&0&0&0\\
0&1&0&0&0&0&0\\
0&0&0&0&0&0&0\\
0&0&0&1&0&0&0\\
0&0&0&0&0&0&0\\
0&0&0&0&0&0&0\\
0&0&0&0&0&0&0
\end{array}\right], \left[\begin{array}{ccccclr}
0&0&0&0&0&0&0\\
0&0&0&0&0&0&0\\
0&1&0&0&0&0&0\\
0&0&0&0&0&0&0\\
0&0&0&1&0&0&0\\
0&0&1&0&0&0&0\\
1&0&0&0&0&0&0\\
\end{array}\right]\right).$
\end{ex}

\begin{theorem}\label{number.bell}
The total number of $GL_2(T_n)$-orbits is equal to the Bell number $B_n$.
\end{theorem}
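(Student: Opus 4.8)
The strategy is to combine Corollary~\ref{number.pairs} with a direct bijection between the admissible pairs ${\tt(A,B)}$ and the set partitions of $\{1,2,\dots,n\}$, whose number is by definition $B_n$. First I would unpack the three defining conditions on ${\tt(A,B)}$. Since $a_{ij}=0$ for $j<i$, the matrix ${\tt A}$ is diagonal with $a_{ii}\in\{0,1\}$; since $b_{ii}=0$, the matrix ${\tt B}$ is strictly lower triangular with $b_{ij}\in\{0,1\}$. The requirement that $[{\tt A}|{\tt B}]$ have exactly $n$ nonzero entries and rank $n$ forces each of the $n$ rows to contain exactly one nonzero entry (a zero row would drop the rank below $n$), and forces these $n$ entries to lie in $n$ distinct columns of the $n\times 2n$ array (otherwise two of the single-$1$ rows would coincide and the rank would be $<n$).

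Then I would encode such a pair as a combinatorial object. Let $S=\{i:a_{ii}=1\}$. For $i\notin S$ the unique nonzero entry of row $i$ lies in ${\tt B}$, say at column $\phi(i)<i$ with $b_{i\phi(i)}=1$; for $i\in S$ it is $a_{ii}=1$ and the corresponding row of ${\tt B}$ vanishes. The distinct-columns condition says exactly that $\phi$ is injective on its domain $\{1,\dots,n\}\setminus S$ (the $A$-columns used are automatically distinct, being the indices of $S$, and they never clash with $B$-columns). Thus admissible pairs are in bijection with partial injections $\phi$ on $\{1,\dots,n\}$ satisfying $\phi(i)<i$, with $S$ recovered as the complement of the domain.

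Next I would identify these partial injections with set partitions. Reading $i\mapsto\phi(i)$ as a directed edge, every vertex has in-degree and out-degree at most one and all edges strictly decrease, so the resulting graph is a disjoint union of strictly decreasing paths whose vertex sets partition $\{1,\dots,n\}$. Conversely, a set partition determines a candidate $\phi$: within each block $u_1<u_2<\cdots<u_k$ set $\phi(u_m)=u_{m-1}$ for $m\geqslant 2$ and leave the block minimum $u_1$ (an element of $S$) outside the domain; one checks this $\phi$ is injective and strictly decreasing. The point that makes this a genuine bijection, and the step I expect to require the most care, is that the partition read off from $\phi$ determines $\phi$ completely: along each component path the vertices appear in strictly decreasing order, hence they are precisely the elements of the corresponding block listed from largest to smallest, so every edge necessarily joins consecutive block elements and no other $\phi$ yields the same partition.

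Finally I would assemble the chain of bijections
$$GL_2(T_n)\text{-orbits}\ \longleftrightarrow\ \text{admissible pairs }{\tt(A,B)}\ \longleftrightarrow\ \text{decreasing partial injections }\phi\ \longleftrightarrow\ \text{set partitions of }\{1,\dots,n\},$$
the first arrow being Corollary~\ref{number.pairs}, and conclude that the total number of $GL_2(T_n)$-orbits equals $B_n$.
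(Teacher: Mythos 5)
Your proof is correct, and it shares the paper's skeleton---reduce to counting the admissible pairs via Corollary~\ref{number.pairs}, then biject those pairs with set partitions---but the bijection you build is genuinely different from the paper's. You first make explicit what the paper leaves implicit: the three conditions force each row of $[{\tt A}|{\tt B}]$ to carry exactly one entry $1$, all in distinct columns, so a pair is the same datum as an injective partial map $\phi$ with $\phi(i)<i$; you then partition $\{1,\dots,n\}$ into the decreasing paths of $\phi$, so that each block $u_1<\dots<u_k$ becomes a chain with $b_{u_m u_{m-1}}=1$ (the standard ``arc diagram'' model of set partitions). The paper instead assigns a row $i$ with $b_{ij}=1$ to the block indexed by $j-\operatorname{rank}B_{ij}$, with a greedy ``$s$-th column not yet containing a $1$'' rule for the inverse, and the two correspondences really do differ: for $n=4$ the paper's representative of $\{\{1,3\},\{2,4\}\}$ has $b_{31}=b_{43}=1$, which your map would send to $\{\{1,3,4\},\{2\}\}$, while your representative of $\{\{1,3\},\{2,4\}\}$ has $b_{31}=b_{42}=1$. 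Since any bijection between the two sets proves the count, both arguments are valid; yours is easier to verify (injectivity plus strict decrease immediately yields a disjoint union of decreasing paths, and the decreasing order forces every edge to join consecutive block elements, giving invertibility), whereas the paper's choice is the one that produces the particular representatives tabulated for $T_4$. Your argument is complete as stated.
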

\begin{proof} Recall that the Bell number $B_n$ is  the number  of partitions $\Pi$ of the set $\{1, 2, \dots n\}$. Therefore,
in the light of Corollary \ref{number.pairs} it suffices to show that the set of pairs ${\tt(A, B)}\in{^2T_n}$ and the family  of partitions $\Pi$ are equipotent.

\noindent
Let $A_{k}$ be the matrix obtained from ${\tt A}=\left(a_{kl}\right)$ by removing its rows and columns number $k+1, k+2, \dots, n$ if $k=1, 2, \dots, n-1$, and let $A_{k}={\tt A}$ if $k=n$. The rank of a matrix $A_{k}$ is denoted by $t_k$ and the expression $T$ means the set $\left\{t_k; k=1, 2, \dots, n\right\}$.
There exists a bijection that associates to every pair ${\tt(A, B)}\in{^2T_n}$ the partition $\Pi=\{U_{t_k}, t_k\in T\}$ such that for any $k\in \{1, 2, \dots n\}$ with $a_{kk}=1$
$$U_{t_k}=\left\{k\right\}\cup\left\{i; b_{ij}=1,
t_k=j-rankB_{ij}, 1\leqslant i\leqslant n, 1\leqslant j\leqslant n\right\},$$
where $B_{ij}$ is the matrix obtained from ${\tt B}=\left(b_{ij}\right)$ by changing its rows number $i, i+1, \dots, n$ and columns number $j, j+1, \dots, n$ into zero rows and columns.

Conversely, to a partition $\Pi=\{U_s, s\in S\}$ of the set $\{1, 2, \dots n\}$ corresponds  exactly one pair ${\tt(A, B)}\in{^2T_n}$. To see this correspondence,  suppose that  $u_s$ is the smallest element of the set $U_s$, where $s\in S$.
An entry $a_{ij}$ of $\tt{A}=(a_{ij})$ is equal to $1$  if $i=j=u_s$ for some $s\in S$, being equal to
$0$ otherwise.

\noindent
Elements of the set $$\{1, 2, \dots n\}\backslash \{u_s, s\in S\}=\{x_m; m=1, 2, \dots, n- rank{\tt A}\},$$ where $x_1< x_2< \dots< x_{n- rank{\tt A}}$, are numbers of rows with entry $1$ of the matrix ${\tt B}$. Any other row of ${\tt B}$ is a zero row.
We have to indicate the column number $y_m$ containing entry $1$ from a row $x_m$ for any $m=1, 2, \dots, n- rank{\tt A}$.
We do it for each row one by one, from the smallest $x_m$, i.e. $x_1$, to the greatest, i.e. $x_{n- rank{\tt A}}$. This order is crucial.
For any $m=1, 2, \dots, n- rank{\tt A}$ the column $y_m$ is the $s$-th column without entry $1$ (from the left). Thereby, entry $1$ in the $x_1$-th row is in the $s$-th column, where $x_1\in U_s$.
\end{proof}
\begin{cor}
The orbit of unimodular cyclic submodules of ${^2T_n}$ represented by the pair $(1, 0)$ of ${^2T_n}$ corresponds to the partition $\Pi=\left\{\left\{1\right\}, \left\{2\right\}, \dots, \left\{n\right\}\right\}$  of the set $\{1, 2, \dots n\}$.
\end{cor}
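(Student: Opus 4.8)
The plan is to apply the explicit bijection constructed in the proof of Theorem \ref{number.bell} directly to the distinguished pair $(1,0)$. First I would identify this pair in the grotesk normal form: the element $1$ of $T_n$ is the identity matrix $I_n$, so that ${\tt A}=I_n$ and ${\tt B}=0$. A quick check confirms that $(I_n,0)$ satisfies the three defining conditions for a representative pair, since each $a_{ii}=1$, every off-diagonal entry of ${\tt A}$ and every entry of ${\tt B}$ vanishes, and $[A|B]$ has exactly $n$ non-zero entries, all on the diagonal of the $A$-block, giving rank $n$. By Remark \ref{unim.tri} the pair is unimodular, because $a_{ii}=1\neq 0$ for every $i$; this justifies calling $T_n(1,0)$ the unimodular orbit and identifying it with one and only one partition via Corollary \ref{number.pairs} and Theorem \ref{number.bell}.

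Next I would compute the data attached to $(I_n,0)$ by the bijection. The truncation $A_k$ is the $k\times k$ identity matrix, hence $t_k=\operatorname{rank}A_k=k$ for every $k$, and therefore $T=\{1,2,\dots,n\}$ with all values distinct. For each $k\in\{1,2,\dots,n\}$ we have $a_{kk}=1$, so $k$ contributes a block $U_{t_k}=U_k$, and the defining formula reads
$$U_{t_k}=\{k\}\cup\{i;\ b_{ij}=1,\ t_k=j-\operatorname{rank}B_{ij},\ 1\leqslant i\leqslant n,\ 1\leqslant j\leqslant n\}.$$
Since ${\tt B}=0$ has no entry equal to $1$, the second set is empty, so $U_k=\{k\}$ for every $k$. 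Collecting these blocks yields $\Pi=\{\{1\},\{2\},\dots,\{n\}\}$, the partition of $\{1,2,\dots,n\}$ into singletons, exactly as claimed.

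The argument is essentially a substitution into the definitions, so I do not expect a genuine obstacle; the only point requiring mild care is to confirm that the second set in the definition of $U_{t_k}$ is genuinely empty when ${\tt B}=0$, which is immediate because that set is indexed precisely by the positions where $b_{ij}=1$, of which there are none.
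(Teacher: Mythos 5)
Your proposal is correct and is exactly the computation the paper intends: the corollary is stated without proof as an immediate consequence of the bijection in Theorem \ref{number.bell}, and your substitution of $({\tt A},{\tt B})=(I_n,0)$ into that bijection (giving $t_k=k$ and $U_k=\{k\}$ because ${\tt B}=0$ has no entries equal to $1$) is the evident justification. No issues.
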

\begin{ex}
Choose the pair
$${\tt(A, B)}=\left(\left[\begin{array}{cccclr}
1&0&0&0&0&0\\ 0&1&0&0&0&0\\0&0&0&0&0&0\\0&0&0&1&0&0\\0&0&0&0&0&0\\0&0&0&0&0&0\end{array}\right],\left[\begin{array}{cccclr}
0&0&0&0&0&0\\ 0&0&0&0&0&0\\0&1&0&0&0&0\\0&0&0&0&0&0\\0&0&0&1&0&0\\0&0&1&0&0&0\end{array}\right]\right)\in{^2T_6}.$$ We have $a_{kk}=1$ for $k=1, 2, 4$, so $\Pi=\{U_{t_1}, U_{t_2}, U_{t_4}\}$, where $t_1=1$, $t_2=2$, $t_4=3$. Of course, $1\in U_1$, $2\in U_2$, $4\in U_3$. Moreover,
\begin{itemize}
\item $b_{32}=1$, thus $j-rankB_{32}=2-0=2$ and hence $3\in U_2$;
\item$b_{54}=1$, thus $j-rankB_{54}=4-1=3$ and hence $5\in U_3$;
\item$b_{63}=1$, thus $j-rankB_{63}=3-1=2$ and hence $6\in U_2$.\end{itemize}
Therefore,  the partition $\Pi=\left\{\{1\}, \{2,3,6\}, \{4, 5\}\right\}$  corresponds to such a pair ${\tt(A, B)}$.

\noindent
We will also show the converse correspondence.
As $\{u_s, s\in S\}=\{1, 2, 4\}$, so $a_{11}=a_{22}=a_{44}=1$ and all the remaining entries of $\tt{A}$ are zero. Moreover, $x_1=3, x_2=5, x_3=6$, hence rows $3$, $5$ and $6$ of $\tt{B}$ contain entry $1$. $3\in U_2$, thus $b_{32}=1$. $5\in U_3$ and the $3$rd column without entry $1$ (from the left) is now the column number $4$, therefore $b_{54}=1$. By the same way we get that $b_{63}=1$, all the remaining entries of $\tt{B}$ being zero. Finally, the pair of ${^2T_6}$ corresponding to $\Pi$ is equal to ${\tt(A, B)}$.
\end{ex}

\begin{ex}
Consider the ring $T_4$. By Theorem \ref{number.bell} there are exactly $B_4=15$ $GL_2(T_4)$-orbits with the following representatives and corresponding partitions of the set $\{1, 2, 3, 4\}$:

\vspace*{0.3cm}

$T_4\left(\left[\begin{array}{cccclr}
1&0&0&0\\ 0&1&0&0\\0&0&1&0\\0&0&0&1\end{array}\right],\left[\begin{array}{cccclr}
0&0&0&0\\ 0&0&0&0\\0&0&0&0\\0&0&0&0\end{array}\right]\right), \ \Pi=\{\{1\}, \{2\}, \{3\}, \{4\}\};$

\vspace*{0.3cm}

$T_4\left(\left[\begin{array}{cccclr}
1&0&0&0\\ 0&1&0&0\\0&0&1&0\\0&0&0&0\end{array}\right],\left[\begin{array}{cccclr}
0&0&0&0\\ 0&0&0&0\\0&0&0&0\\1&0&0&0\end{array}\right]\right), \ \Pi=\{\{1,4\}, \{2\}, \{3\}\};$

\vspace*{0.3cm}

$T_4\left(\left[\begin{array}{cccclr}
1&0&0&0\\ 0&1&0&0\\0&0&1&0\\0&0&0&0\end{array}\right],\left[\begin{array}{cccclr}
0&0&0&0\\ 0&0&0&0\\0&0&0&0\\0&1&0&0\end{array}\right]\right), \ \Pi=\{\{1\}, \{2, 4\}, \{3\}\};$

\vspace*{0.3cm}

$T_4\left(\left[\begin{array}{cccclr}
1&0&0&0\\ 0&1&0&0\\0&0&1&0\\0&0&0&0\end{array}\right],\left[\begin{array}{cccclr}
0&0&0&0\\ 0&0&0&0\\0&0&0&0\\0&0&1&0\end{array}\right]\right), \ \Pi=\{\{1\}, \{2\}, \{3, 4\}\};$

\vspace*{0.3cm}

$T_4\left(\left[\begin{array}{cccclr}
1&0&0&0\\ 0&1&0&0\\0&0&0&0\\0&0&0&1\end{array}\right],\left[\begin{array}{cccclr}
0&0&0&0\\ 0&0&0&0\\1&0&0&0\\0&0&0&0\end{array}\right]\right), \ \Pi=\{\{1,3\}, \{2\}, \{4\}\};$

\vspace*{0.3cm}

$T_4\left(\left[\begin{array}{cccclr}
1&0&0&0\\ 0&1&0&0\\0&0&0&0\\0&0&0&1\end{array}\right],\left[\begin{array}{cccclr}
0&0&0&0\\ 0&0&0&0\\0&1&0&0\\0&0&0&0\end{array}\right]\right), \ \Pi=\{\{1\}, \{2, 3\}, \{4\}\};$

\pagebreak

$T_4\left(\left[\begin{array}{cccclr}
1&0&0&0\\ 0&0&0&0\\0&0&1&0\\0&0&0&1\end{array}\right],\left[\begin{array}{cccclr}
0&0&0&0\\ 1&0&0&0\\0&0&0&0\\0&0&0&0\end{array}\right]\right), \ \Pi=\{\{1,2\}, \{3\}, \{4\}\};$

\vspace*{0.3cm}

$T_4\left(\left[\begin{array}{cccclr}
1&0&0&0\\ 0&1&0&0\\0&0&0&0\\0&0&0&0\end{array}\right],\left[\begin{array}{cccclr}
0&0&0&0\\ 0&0&0&0\\1&0&0&0\\0&1&0&0\end{array}\right]\right), \ \Pi=\{\{1,3,4\}, \{2\}\};$

\vspace*{0.3cm}

$T_4\left(\left[\begin{array}{cccclr}
1&0&0&0\\ 0&1&0&0\\0&0&0&0\\0&0&0&0\end{array}\right],\left[\begin{array}{cccclr}
0&0&0&0\\ 0&0&0&0\\1&0&0&0\\0&0&1&0\end{array}\right]\right), \ \Pi=\{\{1,3\}, \{2,4\}\};$

\vspace*{0.3cm}

$T_4\left(\left[\begin{array}{cccclr}
1&0&0&0\\ 0&1&0&0\\0&0&0&0\\0&0&0&0\end{array}\right],\left[\begin{array}{cccclr}
0&0&0&0\\ 0&0&0&0\\0&1&0&0\\1&0&0&0\end{array}\right]\right), \ \Pi=\{\{1,4\}, \{2,3\}\};$

\vspace*{0.3cm}

$T_4\left(\left[\begin{array}{cccclr}
1&0&0&0\\ 0&1&0&0\\0&0&0&0\\0&0&0&0\end{array}\right],\left[\begin{array}{cccclr}
0&0&0&0\\ 0&0&0&0\\0&1&0&0\\0&0&1&0\end{array}\right]\right), \ \Pi=\{\{1\}, \{2,3,4\}\};$

\vspace*{0.3cm}

$T_4\left(\left[\begin{array}{cccclr}
1&0&0&0\\ 0&0&0&0\\0&0&1&0\\0&0&0&0\end{array}\right],\left[\begin{array}{cccclr}
0&0&0&0\\ 1&0&0&0\\0&0&0&0\\0&1&0&0\end{array}\right]\right), \ \Pi=\{\{1,2,4\}, \{3\}\};$

\vspace*{0.3cm}

$T_4\left(\left[\begin{array}{cccclr}
1&0&0&0\\ 0&0&0&0\\0&0&1&0\\0&0&0&0\end{array}\right],\left[\begin{array}{cccclr}
0&0&0&0\\ 1&0&0&0\\0&0&0&0\\0&0&1&0\end{array}\right]\right), \ \Pi=\{\{1,2\}, \{3, 4\}\};$

\vspace*{0.3cm}

$T_4\left(\left[\begin{array}{cccclr}
1&0&0&0\\ 0&0&0&0\\0&0&0&0\\0&0&0&1\end{array}\right],\left[\begin{array}{cccclr}
0&0&0&0\\ 1&0&0&0\\0&1&0&0\\0&0&0&0\end{array}\right]\right), \ \Pi=\{\{1,2,3\}, \{4\}\};$

\pagebreak

$T_4\left(\left[\begin{array}{cccclr}
1&0&0&0\\ 0&0&0&0\\0&0&0&0\\0&0&0&0\end{array}\right],\left[\begin{array}{cccclr}
0&0&0&0\\ 1&0&0&0\\0&1&0&0\\0&0&1&0\end{array}\right]\right), \ \Pi=\{\{1,2,3,4\}\}.$
\end{ex}

\end{document}